\theoremstyle{plain}
\newtheorem{theorem}{Theorem}
\newtheorem{prop}{Proposition}[section]
\newtheorem{lemma}[prop]{Lemma}
\theoremstyle{definition}
\newtheorem{remark}[prop]{Remark}
\newcommand{\NN}{\mathbb{N}}
\newcommand{\ZZ}{\mathbb{Z}}
\newcommand{\RR}{\mathbb{R}}
\newcommand{\Lo}{\operatorname{Lo}}
\newcommand{\lo}{\operatorname{\mathfrak{lo}}}
\newcommand{\SubMatrix}{\operatorname{SubMatrix}}
\newcommand{\nsc}{\operatorname{nsc}}
\newcommand{\nz}{\operatorname{nz}}
\newcommand{\rank}{\operatorname{pr}}
\newcommand{\frakl}{\mathfrak{l}}
\newcommand{\sign}{\operatorname{sign}}
\newcommand{\Pos}{\operatorname{Pos}}
\newcommand{\cC}{\mathcal{C}}
\newcommand{\bM}{{\mathbf M}}
\newcommand{\bR}{\mathbb{R}}
\newcommand{\ee}{\end{equation}}
\title[Finiteness of rank for Grassmann convexity]
{Finiteness of rank for Grassmann convexity}
\author{Nicolau C. Saldanha}
\address{ Departamento de Matem\'atica, PUC-Rio, R. Mq. de S. Vicente 225, Rio de Janeiro, RJ 22451-900, Brazil}
\email[N. Saldanha]{saldanha@puc-rio.br}
\author{Boris Shapiro}
\address{Department of Mathematics, Stockholm University, SE-106 91, Stockholm, Sweden}
\email[B. Shapiro]{shapiro@math.su.se}
\author{Michael  Shapiro}
\address{Department of Mathematics, Michigan State University,
East Lansing, MI 48824-1027 and National Research University Higher School of Economics, Moscow, Russia}
\email[M. Shapiro]{mshapiro@msu.edu}
\thanks{The first author is supported  by CNPq, Capes and Faperj (Brazil). The third author is partially supported by International Laboratory of Cluster Geometry NRU HSE, RF Government grant, ag. № 075-15-2021-608 dated 08.06.2021 and the NSF grant DMS 2100791.} 
\subjclass{Primary 34C10, 05B30, Secondary 57N80}
\begin{document}

\begin{abstract}
The  Grassmann convexity conjecture, formulated in \cite{ShSh},
gives a conjectural formula  for the maximal  total number of real zeros
of the consecutive Wronskians 
of an arbitrary fundamental solution
to a disconjugate linear ordinary differential equation with real time.
The conjecture can be reformulated in terms of  convex curves
in the nilpotent lower triangular group.
The formula has already been shown to be a correct lower bound
and to give a correct upper bound in several small dimensional cases.
In this paper we obtain a general explicit upper bound.
\end{abstract}


\maketitle

\section{Introduction}
\label{section:introduction}

A linear homogeneous differential equation of order $n$
with continuous  real coefficients defined on some open interval $I$
of the real axis is called \emph{disconjugate} on $I$
(see \cite{Coppel, Hartman})
if any of its nonzero solutions has at most $n-1$ real zeros on $I$,
counting multiplicities.
Any such differential equation is disconjugate
on a sufficiently small interval $I\subset \bR$.

The  Grassmann convexity conjecture, formulated in \cite{ShSh},
says that, for any positive integer $k$ satisfying $0<k<n$,
the number of real zeros in $I$ of the Wronskian
of any $k$ linearly independent solutions
to a disconjugate differential equation of order $n$ on $I$ 
is bounded from above by $k(n-k)$,
which is the dimension of the Grassmannian $G(k,n;\,\bR)$. 
 This conjecture can be reformulated in terms of  convex curves
in the nilpotent lower triangular group
(see Section~\ref{section:discrete} and also \cite{GoSaX, GoSa0, GoSa1}).
The number $k(n-k)$ has already been shown to be a correct lower bound for all $k$ and $n$. Moreover
it gives a correct upper bound for the case $k=2$ 
(see \cite{SaShSh}).
In this paper we obtain a general explicit upper bound 
for the above maximal total number of real zeros which,
although weaker than the one provided
by the  Grassmann convexity conjecture,
still gives an interesting information. 

\smallskip
We now formulate a discrete problem which is equivalent
to the above Grassmann convexity conjecture; 
the equivalence is discussed in Section \ref{section:discrete}. 
The statement and proof of our main theorem 
are self-contained and elementary. 

Given integers $k$ and $n$ with $0 < k < n$,
consider a collection $(v_j)_{1 \le j \le n}$ of vectors in $\RR^k$,
or, equivalently, a matrix $M \in \cC = \RR^{k\times n}$. 
Set $v_j = Me_j$, so that $v_j$ is the $j$-th column of $M$.
For two matrices $M_0, M_1 \in \cC$, we say that 
there exists a \textit{positive elementary move} from $M_0$ to $M_1$
if there exists $j < n$ and $t \in (0,+\infty)$ such that
$M_1 e_j = M_0 e_j + t M_0 e_{j+1}$ and
$M_1 e_{j'} = M_0 e_{j'}$ for $j' \ne j$.
In other words, the vector $v_j$ moves towards $v_{j+1}$
and the other vectors remain constant.
We call a  sequence $(M_s)_{0 \le s \le \ell}$
of $k\times n$-matrices \textit{convex}  if, for all $s < \ell$,
there exists a positive elementary move from $M_s$ to $M_{s+1}$. 
The terminology is motivated by their relationship with convex curves
in the lower triangular group $\Lo_n^{1}$;
see Section \ref{section:discrete} and \cite{SaShSh}.

For $J = \{j_1 < \cdots < j_k\} \subset \{1, \ldots, n\}$, 
define the function $m_J: \cC \to \RR$ by 
\begin{equation}
\label{equation:mJ}
m_J(M) = \det(\SubMatrix(M,\{1,\ldots,k\},J)) =
\det(v_{j_1}, \ldots, v_{j_k}). 
\end{equation}
We are particularly interested in $m_\bullet = m_{\{1, \ldots, k\}}$.
Define  an open dense subset of $(k\times n)$-matrices 
\[ \cC^\ast = \bigcap_{J \subset \{1, \ldots, n\}, |J| = k}
m_J^{-1}[\RR \smallsetminus \{0\}] \subset \cC. \] 
For convex sequences $\bM = (M_s)_{0 \le s \le \ell}$
of matrices in $\cC^\ast$,
we are interested in the \textit{number of sign changes} of $m$:
\[ \nsc(\bM) = |\{ s \in [0,\ell) \cap \ZZ \;|\;
m_\bullet(M_s) m_\bullet(M_{s+1}) < 0 \}|. \] 
(Notice that $\bM$ is merely a sequence of matrices,
not an actual path of matrices;
equivalently, $s$ assumes integer values only.) 

The Grassmann convexity conjecture,
in the discrete model,
states that for every convex sequence $\bM$ we have:  
\begin{equation}
\label{equation:grassmann}
\nsc(\bM) \le k(n-k).
\end{equation}
The equivalence between formulations is
discussed in Section \ref{section:discrete}.
The case $k = 1$ is easy; 
the case $k = 2$ is settled in \cite{SaShSh};
in the same paper we prove that for all $k$ and $n$, 
there exists a convex sequence $\bM$ with $\nsc(\bM) = k(n-k)$.
In this paper we prove the following  estimate.

\begin{theorem}
\label{theorem:main}
For any\; $2 < k < n$ and  any convex sequence of matrices $\bM$,  we have
\begin{equation}\label{eq:main}
 \nsc(\bM) < \frac{(n-k+1)^{2k-3}}{2^{k-3}}.
 \end{equation} 
\end{theorem}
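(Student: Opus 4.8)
The plan is to run an induction on $k$, reducing the leading-minor problem in dimension $k$ to the same problem in dimension $k-1$. Write $m_\bullet = p_k$ and, more generally, set $p_j(M) = m_{\{1,\dots,k-1,j\}}(M)$ for $k \le j \le n$, so that $(p_k,\dots,p_n)$ is a chain of $n-k+1$ real functions with $p_k = m_\bullet$. First I would record the action of the three kinds of elementary move on this chain: a move at a position $j \ge k$ sends $p_j \mapsto p_j + t\,p_{j+1}$ and fixes the other $p_{j'}$; a move at position $k-1$ fixes $p_k$ but perturbs $p_{k+1},\dots,p_n$; a move at a position $< k-1$ fixes the whole chain. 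In particular $m_\bullet$ can change sign only at a position-$k$ move, and such a move acts by the affine rule $p_k \mapsto p_k + t\,p_{k+1}$ with $t>0$.

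The elementary engine of the argument is a one-dimensional telescoping estimate. Suppose that on some stretch of the sequence only moves at positions $\ge k$ occur. Then at a sign change of $p_j$ (necessarily a position-$j$ move) the new sign of $p_j$ agrees with the sign of $p_{j+1}$, so between two consecutive sign changes of $p_j$ the minor $p_{j+1}$ must itself change sign; this gives $\nsc(p_j) \le \nsc(p_{j+1}) + 1$ on such a stretch. Since $p_n$ is constant whenever no position-$(k-1)$ move occurs, iterating yields $\nsc(p_k) \le n-k$ on any such stretch.

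To feed the induction I would reduce the dimension by quotienting $\RR^k$ by the last column $v_n$, which is fixed along the whole convex sequence. The images $u_1,\dots,u_{n-1} \in \RR^{k-1}$ form a convex sequence for the parameters $(k-1,n-1)$ (the position-$(n-1)$ move becomes trivial, and genericity is inherited), and its leading $(k-1)$-minor equals $p_n$ up to a fixed nonzero factor; moreover $p_n$ changes only at position-$(k-1)$ moves, which are exactly the sign-change opportunities of the reduced leading minor. Hence $\nsc(p_n) \le N(k-1,n-1)$, where $N(\kappa,\nu)$ denotes the maximal number of sign changes of the leading minor over all convex sequences with those parameters.

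Finally I would assemble these pieces. Partitioning the sequence into epochs delimited by the position-$(k-1)$ moves, the telescoping estimate bounds the $p_k$ sign changes inside each epoch by $n-k$, and the boundaries contribute nothing since position-$(k-1)$ moves fix $p_k$. The target is the recursion $N(k,n) \le \tfrac12 (n-k+1)^2\, N(k-1,n-1)$; combined with the base case $N(2,\nu) \le 2(\nu-2)$ from \cite{SaShSh} and the invariance of $n-k+1$ under $(k,n) \mapsto (k-1,n-1)$, unfolding gives $N(k,n) \le \tfrac{n-k}{n-k+1}\cdot\tfrac{(n-k+1)^{2k-3}}{2^{k-3}}$, which is strictly below the bound of \eqref{eq:main}. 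The main obstacle is precisely this last recursion: the number of position-$(k-1)$ moves, hence the number of epochs, is a priori unbounded, and these moves perturb the higher minors $p_{k+1},\dots,p_n$ in a way the clean one-dimensional telescoping does not see. The heart of the matter is therefore an amortized count showing that only about $\tfrac12(n-k+1)$ epochs per sign change of $p_n$ can actually be ``productive'' (that is, contain a sign change of $p_k$), so that the unbounded supply of position-$(k-1)$ moves is converted into the quadratic factor $\tfrac12(n-k+1)^2$ multiplying the genuinely lower-dimensional count $\nsc(p_n)$.
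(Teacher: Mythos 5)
Your preliminary observations are all correct: the action of the three kinds of elementary moves on the chain $p_k,\dots,p_n$, the telescoping estimate $\nsc(p_j)\le\nsc(p_{j+1})+1$ on stretches free of position-$(k-1)$ moves (hence $\nsc(p_k)\le n-k$ per epoch), the quotient by the fixed column $v_n$ producing a convex $(k-1,n-1)$ sequence whose leading minor is $p_n$ up to a nonzero constant, and the arithmetic that unfolds the recursion $N(k,n)\le\tfrac12(n-k+1)^2\,N(k-1,n-1)$ from the base case $N(2,\nu)\le 2(\nu-2)$ into a bound even slightly stronger than \eqref{eq:main}. But the recursion itself is exactly the paper's Step Lemma (Lemma \ref{lemma:step}), i.e.\ the entire mathematical content of Theorem \ref{theorem:main}, and you do not prove it: you explicitly defer it to an unspecified ``amortized count'' of productive epochs. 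This is not a routine verification in your framework. A position-$(k-1)$ move can reverse the signs of any of the intermediate minors $p_{k+1},\dots,p_{n-1}$ while leaving the sign of $p_n$ unchanged, so each of the a priori unboundedly many epochs restarts your telescoping chain from scratch; nothing you have set up ties the number of productive epochs to $\nsc(p_n)$, or indeed to any bounded monotone quantity. You have reduced the theorem to its hardest step and stopped there.

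It is instructive to compare with how the paper closes precisely this gap, because its mechanism is quite different from your epoch decomposition. The paper fixes a generic linear form $\omega$ on $\RR^k$ and classifies moves not by their position but by whether they preserve the sign pattern $\bigl(\sign\,\omega(v_{s,j})\bigr)_j$ (type I) or alter it (type II). The amortization is then carried out by a potential (``prerank'') function with two parts: the reduced-dimensional prerank applied to the sequence of difference vectors $w_{s,j}=\omega(v_{s,j+1})\bigl(\tilde v_{s,j+1}-\tilde v_{s,j}\bigr)$ of the $\omega$-normalized columns $\tilde v_{s,j}=v_{s,j}/\omega(v_{s,j})$, plus $r(k-1,n-1)$ times the weighted alternation count $\sum_j j\,[\,\omega(v_{s,j})\omega(v_{s,j+1})<0\,]$. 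The latter takes values in $\bigl[0,\tfrac{(n-k+1)(n-k)}{2}\bigr]$ and strictly decreases at every type II move; this monotone, bounded quantity is what replaces your unbounded epoch count and is the source of the quadratic factor. The dimension reduction is also different in a crucial way: a type I move induces (up to positive diagonal scaling) a positive elementary move on the difference-vector matrix, and a sign change of $m_\bullet$ during a type I move forces a sign change of the reduced leading minor, so sign changes of $m_\bullet$ are charged directly to the $(k-1,n-1)$ prerank. Your quotient construction lacks exactly this property --- its reduced leading minor tracks $p_n$ rather than $m_\bullet$ --- which is why you are forced into the missing amortization in the first place.
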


Inequality \eqref{eq:main} appears to be
the first known explicit upper bound
for $\nsc(\bM)$ in terms of $k$ and $n$.
This bound is a polynomial in $n$ of degree $2k-3$.  
Furthermore, by a well-known (Grassmann) duality interchanging 
$k \leftrightarrow (n-k)$, one can additionally obtain 
\[ \nsc(\bM) <
\min\left(2^{3-k} (n-k+1)^{2k-3},\; 2^{3-n+k} (k+1)^{2(n-k)-3}\right). \]
We know however that the bound in Theorem \ref{theorem:main} is  not  sharp;
see Remark \ref{remark:loose}.
Our current best guess is that the Grassmann convexity conjecture
indeed holds for all $0<k<n$;
this is additionally supported by
our recent computer-aided verification of the latter conjecture in case  $k=3, n=6$. 

\bigskip

\section{Discrete and continuous versions}
\label{section:discrete}

In this section we present an alternative continuous version 
of the Grassmann convexity conjecture
already discussed in previous papers
and prove that it is equivalent to the discrete version
described in the introduction.
We follow the notations of \cite{GoSa0} and \cite{SaShSh}.

Consider the nilpotent Lie group of $\Lo_{n}^{1}$
of real lower triangular matrices with diagonal entries equal to $1$.
Its corresponding Lie algebra is $\lo_{n}^1$,
the space of strictly lower triangular matrices.
For $J \subset \{1, \ldots, n\}$, $|J| = k$,
define $m_J: \Lo_{n}^{1} \to \RR$ 
as in Equation \eqref{equation:mJ};
thus, $m_\bullet(L) = m_{\{1, \ldots, k\}}(L)$
is the determinant of the  lower-left  $k\times k$ minor of $L$.
For $1 \le j < n$,
let $\frakl_j = e_{j+1} e_j^\top \in \lo_{n}^1$
be the matrix with only one nonzero entry $(\frakl_j)_{j+1,j} = 1$.
Write $\lambda_j(t) = \exp(t\frakl_j)$.

A smooth curve $\Gamma: [a,b] \to \Lo_{n}^{1}$
is \textit{convex} if there exist smooth positive functions
$\beta_j: [a,b] \to (0,+\infty)$ such that, for all $t \in [a,b]$,
\[ (\Gamma(t))^{-1} \Gamma'(t) = \sum_j \beta_j(t) \frakl_j. \]
We prove in \cite{GoSa0} that all zeroes of
$m_\bullet \circ \Gamma: [a,b] \to \RR$
are isolated (and of finite multiplicity).
Let $\nz(\Gamma)$ be the number of zeroes of $m_\bullet \circ \Gamma$
 counted  without multiplicity. 
The Grassmann convexity conjecture
states that $\nz(\Gamma) \le k(n-k)$
for all convex $\Gamma$.  (In fact, following the ideas presented in the proof of Theorem 2 of \cite{SaShSh}, one can show that the maximal numbers of zeros of $m_\bullet \circ \Gamma$ counted with and without multiplicity actually coincide.)    

Let $S_n$ be the symmetric group with the standard generators
$a_j = (j,j+1)$, $1 \le j < n$.
Let $\eta \in S_n$ be  
the permutation with the longest reduced word; its
 length  equals $m = n(n-1)/2$.
Let $\Pos_\eta \subset \Lo_{n}^{1}$ be the semigroup
of totally positive matrices, an open subset
(see \cite{BFZ, GoSa0}). 
The boundary of $\Pos_\eta$ can be stratified as
\[ \partial\Pos_\eta =
\bigsqcup_{\sigma \in S_n, \; \sigma \ne \eta} \Pos_\sigma \]
(we follow the notation of \cite{GoSa0};
the subsets $\Pos_\sigma \subset \Lo_n^1$ 
and the above stratification are discussed in Section 5).
Let $\sigma = a_{i_1} \cdots a_{i_l}$ be a reduced word:
if $L \in \Pos_\sigma$ then there exist unique positive
$t_1, \ldots, t_l$ such that 
$L = \lambda_{i_1}(t_1) \cdots \lambda_{i_l}(t_l)$.
Conversely, if $t_1, \ldots, t_l > 0$ then
$\lambda_{i_1}(t_1) \cdots \lambda_{i_l}(t_l) \in \Pos_\sigma$.
If $\Gamma$ is convex and $t_0 < t_1$ then
$(\Gamma(t_0))^{-1} \Gamma(t_1) \in \Pos_\eta$.
Conversely, if $L_0^{-1} L_1 \in \Pos_\eta$
then there exists a convex curve $\Gamma: [0,1] \to \Lo_n^1$
with $\Gamma(0) = L_0$ and $\Gamma(1) = L_1$
(see \cite{GoSa0}, Lemma 5.7).

\begin{lemma}
\label{lemma:discrete}
Fix $k, n, r \in \NN$, with $0 < k < n$.
If $\nsc(\bM) \le r$ for every
convex sequence $\bM = (M_s)_{0 \le s \le \ell}$, 
then $\nz(\Gamma) \le r$ for every convex curve $\Gamma$.
Conversely, 
if $\nz(\Gamma) \le r$ for every convex curve $\Gamma$
then
$\nsc(\bM) \le r$ for every
convex sequence $\bM = (M_s)_{0 \le s \le \ell}$.
\end{lemma}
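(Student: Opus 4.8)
The plan is to set up an explicit dictionary between the two models and then transport sign changes across it in each direction. The key observation I would use is that right multiplication by the generator $\lambda_j(t)=\exp(t\frakl_j)$ with $t>0$ is \emph{exactly} a positive elementary move: for $M\in\cC$ one has $M\lambda_j(t)=M+t\,(Me_{j+1})e_j^\top$, so the $j$-th column becomes $v_j+tv_{j+1}$ and all others are fixed. Let $\pi:\Lo_n^1\to\cC$ send a matrix to its last $k$ rows; writing $\pi(L)=PL$ with $P=[\,0\mid I_k\,]$ gives the intertwining $\pi(LX)=\pi(L)X$ for $X\in\Lo_n^1$, and $m_\bullet(L)$ (the lower-left $k\times k$ minor) equals $m_\bullet(\pi(L))$. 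Thus sampling a convex curve in $\Lo_n^1$ yields a convex sequence after projecting, and conversely positive elementary moves lift to right multiplications; this is the engine of both implications.

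To prove ``continuous $\Rightarrow$ discrete'' I would start from a convex sequence $\bM=(M_s)_{0\le s\le\ell}$, so $M_{s+1}=M_s\lambda_{j_s}(t_s)$ with $t_s>0$. Since $M_s\in\cC^\ast$ the minor $m_{\{n-k+1,\dots,n\}}(M_0)=\det B\neq0$, where $B$ is the right $k\times k$ block; left multiplication by $B^{-1}\in\GL_k$ commutes with the moves and only rescales $m_\bullet$ by the constant $\det B^{-1}$, leaving the sign-change count unchanged, so I may assume $M_0=\pi(L^{(0)})$ lies in the image of $\pi$. Next I would thicken each bare move: choose $Q_s\in\Pos_\eta$ with $\|Q_s-\lambda_{j_s}(t_s)\|<\delta$ (possible since $\lambda_{j_s}(t_s)\in\overline{\Pos_\eta}$ and $\Pos_\eta$ is open) and set $\hat L_s=L^{(0)}Q_0\cdots Q_{s-1}$. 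As $\Pos_\eta$ is a semigroup, $\hat L_s^{-1}\hat L_{s'}\in\Pos_\eta$ for $s<s'$, so the arcs furnished by \cite{GoSa0} (Lemma 5.7) joining consecutive $\hat L_s$ concatenate, after the standard smoothing of junctions that keeps the logarithmic derivative inside the open cone of positive combinations $\sum_j\beta_j\frakl_j$, into a single convex curve $\Gamma$ with $\Gamma(u_s)=\hat L_s$. Letting $\delta\to0$ gives $\pi(\hat L_s)=M_0\,Q_0\cdots Q_{s-1}\to M_s$, hence $m_\bullet(\Gamma(u_s))\to m_\bullet(M_s)\neq0$; for $\delta$ small the signs agree, so $m_\bullet\circ\Gamma$ changes sign at least $\nsc(\bM)$ times and $\nz(\Gamma)\ge\nsc(\bM)$, whence $\nsc(\bM)\le r$.

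For the reverse implication I would fix a convex $\Gamma:[a,b]\to\Lo_n^1$, put $M(t)=\pi(\Gamma(t))$ so that $m_\bullet(M(t))=m_\bullet(\Gamma(t))$, let $\tau_1<\dots<\tau_N$ be the $N=\nz(\Gamma)$ distinct zeros, and pick generic sample times $s_0<\tau_1<s_1<\dots<\tau_N<s_N$ with $M(s_i)\in\cC^\ast$. The stated property gives $\Gamma(s_i)^{-1}\Gamma(s_{i+1})\in\Pos_\eta$; factoring it in a reduced word for $\eta$ as $\lambda_{c_1}(r_1)\cdots\lambda_{c_p}(r_p)$ with $r_q>0$ and right-multiplying $M(s_i)$ by these factors in turn realizes the passage from $M(s_i)$ to $M(s_{i+1})=\pi(\Gamma(s_{i+1}))$ as a chain of positive elementary moves (perturbing the $r_q$ slightly to keep all intermediate matrices in $\cC^\ast$). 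Concatenating these chains produces a convex sequence $\bM$ whose sign changes at the sampled points reproduce those of $m_\bullet\circ\Gamma$.

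The step I expect to be the main obstacle lies precisely here: $m_\bullet\circ\Gamma$ may have zeros of even multiplicity, at which no sign change occurs, so the construction above only yields $\nsc(\bM)=\#\{i:\tau_i\text{ has odd multiplicity}\}$, which may be strictly less than $N$. This is exactly the issue addressed by the parenthetical remark following the definition of $\nz$: I would first replace $\Gamma$ by a nearby convex $\tilde\Gamma$ in which every even-multiplicity zero is split, in the sign-creating direction, into at least two transverse crossings while the odd ones persist, so that $m_\bullet\circ\tilde\Gamma$ has at least $N$ sign changes. Carrying out this splitting \emph{within} the class of convex curves, rather than by an arbitrary perturbation, is the delicate point, and I would handle it by the technique in the proof of Theorem 2 of \cite{SaShSh}. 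Discretizing $\tilde\Gamma$ as above then yields a convex sequence with $\nsc\ge N$, and the hypothesis $\nsc\le r$ forces $\nz(\Gamma)=N\le r$, completing the equivalence.
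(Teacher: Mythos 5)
Your converse direction (assuming the bound for curves and deducing it for sequences) is essentially the paper's own argument: approximate each elementary move $\lambda_{j_s}(t_s)$ by a nearby element of $\Pos_\eta$, realize the resulting points as values of a convex curve by concatenating the arcs of Lemma 5.7 of \cite{GoSa0} with smooth gluing, and transfer the signs of $m_\bullet$ for a sufficiently small perturbation. That half is fine, and your explicit normalization of $M_0$ by left multiplication is if anything more careful than the paper.

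The genuine gap is in the other direction, and it sits exactly where you flagged it. Sampling the curve strictly between its zeros can only detect sign changes, so your construction yields a convex sequence whose $\nsc$ equals the number of odd-multiplicity zeros of $m_\bullet\circ\Gamma$; to recover all of $\nz(\Gamma)$ you must first split every even-multiplicity zero into transverse crossings by a perturbation that stays inside the class of convex curves. That splitting claim is the entire difficulty of this implication, and you do not prove it: you delegate it to ``the technique in the proof of Theorem 2 of \cite{SaShSh}''. Note that the paper's parenthetical remark invoking that technique concerns the equality of the suprema of zero counts with and without multiplicity; it is stated as an aside, is not proved in this paper, and is not used in the paper's proof of this lemma, so you cannot lean on it as if it were available. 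The paper instead sidesteps multiplicity altogether with a short algebraic trick that is the real content of this direction: sample the curve \emph{at} each zero $t_s$, let $\tilde M_s$ be the bottom $k$ rows of $\Gamma(t_s)$, and perturb slightly so that $m_J(\tilde M_s)\ne 0$ for all $J\ne\{1,\ldots,k\}$. Since
\[ m_\bullet(\tilde M_s \lambda_k(t)) \;=\; m_\bullet(\tilde M_s) + t\, m_{\{1,\ldots,k-1,k+1\}}(\tilde M_s) \]
is affine in $t$ with nonzero slope and vanishes at $t=0$ up to an error that can be kept small relative to $\epsilon$, the two matrices $\tilde M_s\lambda_k(-\epsilon)$ and $\tilde M_s\lambda_k(\epsilon)$ differ by the positive elementary move $\lambda_k(2\epsilon)$ and carry opposite signs of $m_\bullet$. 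This manufactures one guaranteed sign change per zero of $m_\bullet\circ\Gamma$, whatever its multiplicity, and the passages between consecutive such pairs are then bridged by factoring suitable elements of $\Pos_\eta$ into elementary moves. Until you supply a proof of your convex splitting lemma your argument for this direction is incomplete; with the trick above, no such lemma is needed.
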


\begin{proof}
Consider a convex curve $\Gamma_0$ with $\nz(\Gamma_0) = r$.
We use $\Gamma_0$ to construct a convex sequence $\bM$
with $\nsc(\bM) \ge r$.
Indeed, let $t_1 < \cdots < t_r$ be such that
$m_\bullet(\Gamma(t_s)) = 0$ for all $1 \le s \le r$.
Take $\tilde M_{s} = \SubMatrix(\Gamma(t_s),
\{n-k+1,\ldots, n\}, \{1, \ldots, n\})$
and corresponding vectors $\tilde v_{s,j}$.
 By taking a  small perturbation we may assume that  $m_J(\tilde M_s) \ne 0$ for $|J| = k$, $J \ne \{1, \ldots, k\}$.  
For $m = n(n-1)/2$ and  a small positive number $\epsilon > 0$,
set
$M_{(m+1)s} := \tilde M_s \lambda_k(-\epsilon)$,
$M_{(m+1)s+1} := \tilde M_s \lambda_k(\epsilon)$.
Notice that there exists a positive elementary move from
$M_{(m+1)s}$ to $M_{(m+1)s+1}$
and that $\sign(m_\bullet(M_{(m+1)s+1})) \ne \sign(m_\bullet(M_{(m+1)s}))$.
If the above  perturbation and $\epsilon > 0$ are sufficiently small,
there exists $L_s \in \Pos_\eta$ such that
$M_{(m+1)(s+1)} = M_{(m+1)s+1} L_s$.
Write $L_s = \lambda_{i_1}(t_1) \cdots \lambda_{i_m}(t_m)$
and, for $1 \le j \le m$, recursively define 
$M_{(m+1)s+j+1} = M_{(m+1)s+j} \lambda_{i_j}(t_j)$.
This is the desired convex sequence of matrices.

Conversely, let $\bM = (M_s)_{1 \le s \le \ell}$
be a convex sequence of matrices
with $\nsc(\bM) = r$.
We use $\bM$ to construct a smooth convex curve $\Gamma$
with $\nz(\Gamma) \ge r$.
For each $s$, we have
$M_{s+1} = M_s \lambda_{i_s}(t_s)$, $t_s > 0$.
Notice that $\lambda_{i_s}(t_s) \in \Pos_{a_{i_s}}$.
If needed slightly perturb the matrices $(M_s)$ to obtain matrices
$(\tilde M_s)$ such that
$\tilde M_{s+1} = \tilde M_s L_s$, $L_s \in \Pos_\eta$.
Define $\Gamma(1) \in \Lo_{n}^1$ such that
$\tilde M_{1} = \SubMatrix(\Gamma(1),
\{n-k+1,\ldots, n\}, \{1, \ldots, n\})$.
Recursively define $\Gamma(s+1) = \Gamma(s) L_s$
so that for all $s \in \ZZ$, $1 \le s \le \ell$, we have
$\tilde M_{s} = \SubMatrix(\Gamma(s),
\{n-k+1,\ldots, n\}, \{1, \ldots, n\})$.
Notice that for each $s$, there exists a smooth convex arc from $\Gamma(s)$ to $\Gamma(s+1)$.
As discussed in Section 6 of \cite{GoSa1},
these arcs can be chosen so that $\Gamma$
is also smooth at the integer glueing points.
\end{proof}

\bigskip

\section{Ranks}
\label{section:ranks}

For $0 < k < n$,
define $r(k,n) \in \NN \cup \{\infty\} = \{0, 1, 2, \ldots, \infty \}$ as 
\[ r(k,n) = \sup_{\bM} \nsc(\bM); \]
where $\bM$ runs over all convex sequences of matrices. 
By the main results of \cite{SaShSh},  $r(2,n) = 2(n-2)$;
and, additionally for all $0 < k < n$, we have $r(k,n) \ge k(n-k)$.
Using the examples for which $\nsc(\bM) = k(n-k)$ provided by \cite{SaShSh},
the Grassmann convexity conjecture
(as in Equation \eqref{equation:grassmann})
is equivalent to
\begin{equation}
\label{equation:grassmann2}
r(k,n) = k(n-k).
\end{equation}

\smallskip

Given a subset of $k\times n$-matrices $X \subseteq \cC^\ast$,
a \textit{prerank function} for $X$ is a function
$$\rank: X \to \NN = \{0, 1, 2, \ldots \}$$ with the  properties:
\begin{enumerate}
\item{if
there exists a positive elementary move from $M_0 \in X$ to $M_1 \in X$
then $\rank(M_0) \ge \rank(M_1)$;}
\item{if
there exists a positive elementary move from $M_0 \in X$ to $M_1 \in X$
and $m(M_0) m(M_1) < 0$
then $\rank(M_0) > \rank(M_1)$.}
\end{enumerate}
When $X$ is not mentioned we assume that $X = \cC^\ast$.
A prerank function for a convex sequence $\bM$
is, by definition,  a prerank function for its image.
A prerank function $\rank$ for $\cC^\ast$ is called \textit{regular}
if $\rank(MD) = \rank(M)$ for every $M \in \cC^\ast$
and every positive diagonal matrix $D \in \RR^{n \times n}$;
in terms of sequences of vectors,
this means that multiplying each vector $v_j$
by a positive real number $d_j$ does not affect the rank.  For $k = 2$, 
 a regular prerank function
$\rank: \cC^\ast \to [0,2(n-2)] \cap \NN$ has been constructed in \cite{SaShSh}.

\begin{lemma}
\label{lemma:rank}
For $0 < k < n$, the following properties are equivalent:
\begin{enumerate}
\item{$r(k,n) \le r_0$;}
\item{there exists a regular prerank function for $\cC^\ast$
whose image is contained in $[0,r_0]$;}
\item{there exists a prerank function for $\cC^\ast$
whose  image is contained in $[0,r_0]$;}
\item{for every convex sequence $\bM$ of matrices,  
there exists a prerank function for $\bM$
whose image is contained in $[0,r_0]$.}
\end{enumerate}
\end{lemma}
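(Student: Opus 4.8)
The plan is to prove Lemma~\ref{lemma:rank} by establishing a cycle of implications $(1)\Rightarrow(2)\Rightarrow(3)\Rightarrow(4)\Rightarrow(1)$; two of these are trivial inclusions, so the real content lies in $(1)\Rightarrow(2)$ and $(4)\Rightarrow(1)$. The implication $(2)\Rightarrow(3)$ is immediate since a regular prerank function is in particular a prerank function, and $(3)\Rightarrow(4)$ holds because a prerank function for all of $\cC^\ast$ restricts to one on the image of any convex sequence $\bM$. So the argument reduces to constructing a regular prerank function bounded by $r_0$ from the hypothesis $r(k,n)\le r_0$, and conversely bounding $\nsc(\bM)$ by $r_0$ from the existence of per-sequence prerank functions.

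For $(4)\Rightarrow(1)$, the idea is telescoping. Fix a convex sequence $\bM=(M_s)_{0\le s\le \ell}$ and let $\rank$ be a prerank function for $\bM$ with image in $[0,r_0]$. At every index $s<\ell$ there is a positive elementary move from $M_s$ to $M_{s+1}$, so property~(1) of a prerank function gives $\rank(M_s)\ge \rank(M_{s+1})$, and property~(2) gives a \emph{strict} drop $\rank(M_s)>\rank(M_{s+1})$ precisely at each sign change, i.e.\ whenever $m_\bullet(M_s)m_\bullet(M_{s+1})<0$. Since the sequence $(\rank(M_s))_s$ is non-increasing and drops by at least one at each of the $\nsc(\bM)$ sign changes, the total decrease is at least $\nsc(\bM)$; but the total decrease is bounded by $\rank(M_0)-\rank(M_\ell)\le r_0-0=r_0$. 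Hence $\nsc(\bM)\le r_0$ for every convex sequence, which is exactly $r(k,n)\le r_0$.

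For $(1)\Rightarrow(2)$, the natural construction is to define the rank of a matrix as the worst-case future number of sign changes. Concretely, I would set
\[
\rank(M)=\sup_{\bM}\ \nsc(\bM),
\]
where $\bM$ ranges over all convex sequences \emph{starting at} $M$ (i.e.\ $M_0=M$). The hypothesis $r(k,n)\le r_0$ guarantees this supremum is finite and lies in $[0,r_0]$. Property~(1) of a prerank function follows because any convex sequence starting at $M_1$ can be prepended with the move $M_0\to M_1$ to yield a convex sequence starting at $M_0$ with the same or one more sign change, so $\rank(M_0)\ge\rank(M_1)$. Property~(2) follows by the same prepending when the move $M_0\to M_1$ is itself a sign change, which forces the prepended sequence to have strictly more sign changes, giving $\rank(M_0)\ge \nsc+1>\rank(M_1)$ after taking suprema. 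Regularity, $\rank(MD)=\rank(M)$ for positive diagonal $D$, follows because right multiplication by $D$ is a bijection on convex sequences that preserves both the positive-elementary-move structure and the signs of $m_\bullet$, since scaling columns by positive reals multiplies $m_\bullet$ by a positive factor.

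The main obstacle is the careful handling of suprema in $(1)\Rightarrow(2)$: one must check that $\rank$ takes values in $\NN$ (not merely in $[0,\infty]$) and that the strict inequality in property~(2) survives passing to the supremum. The cleanest route is to first observe that under the hypothesis the relevant sup is a finite integer attained-or-approached by integer values, so the ``$+1$'' in the sign-change case indeed produces a strict gap. I would also verify that the prepended sequence is genuinely convex, i.e.\ that gluing a single positive elementary move onto an existing convex sequence yields a convex sequence, which is immediate from the definition since convexity is a purely local condition on consecutive pairs. Beyond this bookkeeping the argument is elementary, and the equivalence should follow without further machinery.
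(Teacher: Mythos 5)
Your proposal is correct and takes essentially the same approach as the paper: the same cycle $(1)\Rightarrow(2)\Rightarrow(3)\Rightarrow(4)\Rightarrow(1)$, with the regular prerank function in $(1)\Rightarrow(2)$ defined as the supremum of $\nsc$ over convex sequences starting at the given matrix, and $(4)\Rightarrow(1)$ obtained by telescoping to get $\nsc(\bM)\le\rank(M_0)-\rank(M_\ell)\le r_0$. Your write-up is in fact somewhat more detailed than the paper's, which leaves the verification of the prerank properties (prepending a move, the strict drop at sign changes, and preservation of signs under positive column scaling) implicit.
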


\begin{proof}
Assuming the first item, let us construct a regular prerank function.
For $M \in \cC^\ast$, set
\[ \rank(M) = \max_{\bM = (M_s)_{0 \le s \le \ell}, \; M_0 = M} \nsc(\bM). \]
Regularity follows from the observation that
if $D \in \RR^{n\times n}$ is a positive diagonal matrix
and $(M_s)$ is a convex sequence then so is $(M_s D)$.
The second item trivially implies the third.
 Assuming the third item,  we obtain a prerank function for $\cC^\ast$.
To prove the fourth item, given a convex sequence, restrict the above prerank function 
to the image of $\bM$ to obtain a prerank function for $\bM$,

Given a convex sequence $\bM = (M_s)_{0 \le s \le \ell}$
and a prerank sequence for $\ell$,  we obtain that 
$\nsc(\bM) \le \rank(M_0) - \rank(M_\ell)$.
Thus, the fourth item implies the first, completing the proof.
\end{proof}

\bigskip

\section{Step lemma}
\label{section:step}

The following lemma provides  
the induction step
necessary to settle Theorem \ref{theorem:main}.

\begin{lemma}
\label{lemma:step}
For all\;   $2 < k < n$,  we have that 
\[ r(k,n) \le \frac{(n-k+1)^2}{2} \, r(k-1,n-1). \]
\end{lemma}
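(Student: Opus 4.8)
The plan is to invoke Lemma~\ref{lemma:rank}: it suffices to produce a prerank function $\rank$ for $\cC^\ast$ (the $(k,n)$ problem) taking values in $[0,\tfrac{(n-k+1)^2}{2}\,r(k-1,n-1)]$. Abbreviate $C=r(k-1,n-1)$ and fix, by the induction hypothesis together with Lemma~\ref{lemma:rank}, a regular prerank function $\rho$ for the $(k-1,n-1)$ problem with values in $[0,C]$. The reduction that lowers both $k$ and $n$ by one is the projection $\pi$ that quotients $\RR^k$ by the line $\langle v_1\rangle$ and records the images $\pi(v_2),\dots,\pi(v_n)\in\RR^{k-1}$; after a preliminary perturbation I may assume $\pi(M)$ lies in the corresponding $\cC^\ast$. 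Expanding the determinant along $v_1$ shows that $m_\bullet(M)=\det(v_1,\dots,v_k)$ and the main minor $\det(\pi(v_2),\dots,\pi(v_k))$ of $\pi(M)$ have the same sign, up to a factor that is constant as long as $v_1$ is held fixed.

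First I would record how the two families of elementary moves interact with $\pi$. A move at an index $j\ge 2$ fixes $v_1$, hence fixes $\pi$, and carries $\pi(M)$ to the result of the corresponding positive elementary move (at index $j-1$) of the projected configuration; among these, only a move at index $j=k$ can change the sign of $m_\bullet$, and downstairs it is exactly a move at the projected main-minor index. Consequently $\rho\circ\pi$ is non-increasing along every $j\ge 2$ move and strictly decreasing whenever $m_\bullet$ changes sign. By contrast a move at index $j=1$ sends $v_1\mapsto v_1+tv_2$, which leaves $m_\bullet=\det(v_1+tv_2,v_2,\dots,v_k)$ \emph{unchanged} but perturbs $\pi$: a direct computation in a basis adapted to $(v_1,v_2)$ shows it fixes $\pi(v_2)$ and shifts every remaining $\pi(v_i)$ by a scalar multiple of $\pi(v_2)$. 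Thus $j=1$ moves never create a sign change of $m_\bullet$; grouping the sequence into maximal blocks of $j\ge 2$ moves and bounding each block by the induction hypothesis already yields $\nsc(\bM)\le (N+1)\,C$, where $N$ is the number of $j=1$ moves. The difficulty is that these $j=1$ moves can raise $\rho\circ\pi$, and $N$ is a priori unbounded.

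To package this I would seek an integer potential $g\colon\cC^\ast\to\{0,1,\dots,\binom{n-k+1}{2}\}$ that is non-increasing along every positive elementary move and strictly decreases along every $j=1$ move across which $\rho\circ\pi$ increases, and then set $\rank=g\cdot C+\rho\circ\pi$. Granting this, $\rank$ is non-increasing everywhere (at a disruptive $j=1$ move $\Delta g\le-1$ absorbs the increase $\Delta(\rho\circ\pi)\le C$), it strictly decreases at each sign change of $m_\bullet$ (these occur only at $j=k$ moves, where $g$ is non-increasing and $\rho\circ\pi$ strictly decreases), and $\rank\le\binom{n-k+1}{2}\,C+C=\bigl(\binom{n-k+1}{2}+1\bigr)C\le\tfrac{(n-k+1)^2}{2}\,C$, so Lemma~\ref{lemma:rank} gives the claim.

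The construction and verification of $g$ is the heart of the argument and the step I expect to be the main obstacle. The guiding observation is a monotone stabilization: under a $j=1$ move the minor $m_{\{1\}\cup J''}$ (for $J''\subset\{3,\dots,n\}$, $|J''|=k-1$) evolves linearly with slope $m_{\{2\}\cup J''}$, while $m_{\{2\}\cup J''}$ is itself fixed, so once $\sign m_{\{1\}\cup J''}=\sign m_{\{2\}\cup J''}$ no further $j=1$ move can flip it back. I would therefore define $g$ as a weighted count of the ``disagreeing'' sign patterns attached to the $n-k+1$ columns of index $\ge k$, arranged so that every $\rho\circ\pi$-raising $j=1$ move records a new, permanent agreement while no $j\ge 2$ move manufactures a fresh disagreement. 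The delicate points—where the full convexity hypothesis must enter—are showing that only pairwise interactions among these $n-k+1$ columns can ever be pending, which caps the range at $\binom{n-k+1}{2}$, and controlling $\rho\circ\pi$ across the chamber-preserving $j=1$ moves so that $g$ genuinely dominates its increases; this bookkeeping, rather than the projection itself, is where I anticipate the technical work.
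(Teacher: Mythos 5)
Your skeleton---induct via Lemma~\ref{lemma:rank}, reduce to the $(k-1,n-1)$ problem by a projection, and combine the inherited prerank $\rho\circ\pi$ with a bounded integer potential $g$ through $\rank = g\cdot C + \rho\circ\pi$---is exactly the architecture of the paper's proof, and your block-decomposition bound $\nsc(\bM)\le (N+1)\,C$ is correct as far as it goes. But the proposal has a genuine gap precisely where you defer the ``technical work'': the potential $g$ is never constructed, and the stabilization argument you sketch in its support does not survive the structure of a general convex sequence. Your observation that under $j=1$ moves the minor $m_{\{1\}\cup J''}$ evolves with fixed slope $m_{\{2\}\cup J''}$, so that a sign agreement, once reached, persists, is only valid \emph{within a maximal block of consecutive $j=1$ moves}: the interleaved $j\ge 2$ moves change both $m_{\{1\}\cup J''}$ and $m_{\{2\}\cup J''}$ and can recreate disagreements. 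Nothing in the sketch rules out a convex sequence alternating indefinitely between $j\ge2$ moves that manufacture fresh disagreements and $j=1$ moves that raise $\rho\circ\pi$; hence the claimed cap of $\binom{n-k+1}{2}$ on the number of $\rho\circ\pi$-raising $j=1$ moves---the fact on which the entire bound rests---is unsubstantiated. Without it, $N$ is unbounded and no bound on $\nsc(\bM)$ follows.

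The root of the difficulty is the choice of projection: quotienting by $\langle v_1\rangle$ ties the reduction to a \emph{moving} column, so the bad moves ($j=1$) act on the projected configuration in an essentially uncontrolled way. The paper instead projects with respect to a \emph{fixed} generic linear form $\omega$: columns are rescaled into the affine hyperplane $\omega=1$, and the consecutive differences $w_{s,j}=\omega(v_{s,j+1})(\tilde v_{s,j+1}-\tilde v_{s,j})$ form the $(k-1)\times(n-1)$ configuration. With this choice the bad moves are those in which the moved column crosses the hyperplane $\omega=0$, and they admit an explicit monotone statistic, $\rank_{II}(M_s)=\sum_j j\cdot[\,\omega(v_{s,j})\omega(v_{s,j+1})<0\,]$: at every crossing a sign disagreement at position $j$ either vanishes or moves to position $j-1$, so $\rank_{II}$ strictly decreases, and it is bounded by $(n-k)(n-k+1)/2$ because the last $k$ columns keep $\omega>0$ along the whole sequence (positivity propagates under positive elementary moves). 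This one-sided, leftward-only drift of disagreements is exactly the global monotonicity your $g$ would need and which your setup does not provide. To salvage your route you would have to exhibit an analogous provably monotone quantity for the interaction of the moving $v_1$ with the rest of the configuration; as written, the proposal is a plan whose central lemma is missing.
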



\begin{proof}[Proof of Theorem \ref{theorem:main}]
In order to use induction on $k$,
notice first that the inequality \eqref{eq:main} holds for $k = 2$.
By inductive assumption and  Lemma \ref{lemma:step}, 
we get  
\begin{align*} 
\nsc(\bM) &\le r(k,n) \le
\frac{(n-k+1)^2}{2}  r(k-1,n-1) < \\
&<
\frac{(n-k+1)^2}{2}\,2^{3-(k-1)} (n-k+1)^{2(k-1) - 3} =
2^{3-k} (n-k+1)^{2k-3},
\end{align*}
completing the proof.
\end{proof}

\begin{proof}[Proof of Lemma \ref{lemma:step}]
Set $r_{-} = r(k-1,n-1)$ (assumed to be finite) and define 
 $r_{0} := (n-k+1)^2  r_{-}/2$.
Let $\cC^\ast_{-} \subset \cC_{-} = \RR^{(k-1)\times (n-1)}$
be the set of matrices $M \in \cC_{-}$ with
$m_J(M) \ne 0$ for all $J \subset \{1, \ldots, n-1\}$, $|J| = k-1$.
By Lemma \ref{lemma:rank}, there exists
a regular prerank function
$\rank_{-}: \cC_{-} \to [0,r_{-}] \cap \NN$.
Given a convex sequence $\bM = (M_s)_{0 \le s \le \ell}$ of matrices
in $\cC^\ast \subset \cC = \RR^{k \times n}$, 
we construct a prerank function for $\bM$
with image contained in $[0,r_0]$:
by Lemma \ref{lemma:rank} this will complete the proof of Lemma \ref{lemma:step}.

Define $v_{s,j} = M_s e_j$.
Let $H_0 \subset \RR^k$ be a linear hyperplane in generic position,
defined by a linear form $\omega: \RR^k \to \RR$;
fix a basis of $H_0$ in order  to identify it with $\RR^{k-1}$.
Let $H_1 = \omega^{-1}[\{1\}]$ be an affine hyperplane parallel to $H_0$.
We may assume that for all $s$ and $j$, we have
$\omega(v_{s,j}) \ne 0$.
We may furthermore assume that
$\omega(v_{0,j}) > 0$ for $j > n-k$,
which implies
$\omega(v_{s,j}) > 0$ for $j > n-k$ and all $s$.
Set $\tilde v_{s,j} = v_{s,j}/\omega(v_{s,j})$
so that $\tilde v_{s,j} \in H_1$.
For $j < n$, set
$w_{s,j} = \omega(v_{s,j+1}) (\tilde v_{s,j+1} - \tilde v_{s,j}) \in H_0$.
For  a given $s$,
the sequence of vectors $(w_{s,j})_{1 \le j \le n-1}$
defines 
a matrix $\tilde M_s \in \cC_{-} = \RR^{(k-1)\times (n-1)}$.
We assume that $m(M_s) \ne 0$, which implies $m(\tilde M_s) \ne 0$.

\smallskip
Define the prerank as
$\rank(M_s) = \rank_{I}(M_s) +  \rank_{II}(M_s)\cdot r_{-} $,  where 
\[ \rank_{I}(M_s) = \rank_{-}(\tilde M_s), \quad
\rank_{II}(M_s) = \sum_{1 \le j < n}
[ \omega(v_{s,j}) \omega(v_{s,j+1}) < 0 ]\cdot  j. \]
Here we use Iverson notation:
$[ \omega(v_{s,j}) \omega(v_{s,j+1}) < 0 ]$ equals $1$
if $\omega(v_{s,j}) \omega(v_{s,j+1}) < 0$ and $0$ otherwise.
In particular, if $j > n-k$ then  
$[ \omega(v_{s,j}) \omega(v_{s,j+1}) < 0 ] = 0$.
We therefore have
\[ \rank_{I}(M_s) \in [0,r_{-}] \cap \ZZ, \qquad
\rank_{II}(M_s) \in \left[0,\frac{(n-k+1)(n-k)}{2} \right] \cap \ZZ, \]
and thus 
\[ \rank(M_s) \in [0,r_0] \cap \ZZ. \]
We need to  verify that $\rank(M_s)$ is indeed a prerank function.

Recall that there exists a positive elementary move from $M_s$ to $M_{s+1}$,
which we call the \emph{$s$-th move} in $\bM$.
There are two kinds of positive elementary moves.
If $\sign(\omega(v_{s,j})) = \sign(\omega(v_{s+1,j}))$
for all $j$ then we say the $s$-th move is of type I;
otherwise it is of type II.

First consider $s$ of type II.
Let $j$ be such that $v_{s+1,j} = v_{s,j} + t v_{s,j+1}$, $t > 0$.
By taking $H$ in general position
and introducing, if necessary, intermediate points
we may assume that $\sign(m(M_s)) = \sign(m(M_{s+1}))$.
For $j' \ne j$, we have $v_{s+1,j'} = v_{s,j'}$
implying that  $\sign(\omega(v_{s+1,j'})) = \sign(\omega(v_{s,j'}))$.
Therefore  we have
\[ \sign(\omega(v_{s+1,j})) = \sign(\omega(v_{s+1,j+1})) =
\sign(\omega(v_{s,j+1})) = -\sign(\omega(v_{s,j})) \]
and thus
\[ [ \omega(v_{s,j}) \omega(v_{s,j+1}) < 0 ] = 1,
\qquad [ \omega(v_{s+1,j}) \omega(v_{s+1,j+1}) < 0 ] = 0. \]
For $j' = j-1$, we get
\[ [ \omega(v_{s,j'}) \omega(v_{s,j'+1}) < 0 ] =
1 - [ \omega(v_{s+1,j'}) \omega(v_{s+1,j'+1}) < 0 ]; \]
while for $j' \notin \{ j-1, j\}$,  we get 
\[ [ \omega(v_{s,j'}) \omega(v_{s,j'+1}) < 0 ] =
[ \omega(v_{s+1,j'}) \omega(v_{s+1,j'+1}) < 0 ]. \]
Thus, in all cases we obtain that 
$\rank_{II}(M_{s+1}) < \rank_{II}(M_s)$
which implies that 
$\rank(M_{s+1}) \le \rank(M_s)$
 independently of the values of
$\rank_{I}(M_s)$ and $\rank_{I}(M_{s+1})$.

Consider now $s$ of type I
so that $\rank_{II}(M_s) = \rank_{II}(M_{s+1})$.
Again, let $j$ be such that $v_{s+1,j} = v_{s,j} + t v_{s,j+1}$, $t > 0$.
For $j' \ne j$, we obtain $v_{s+1,j'} = v_{s,j'}$
and therefore $\tilde v_{s+1,j'} = \tilde v_{s,j'}$.
Thus 
\[ \tilde v_{s+1,j} =
\frac{\omega(v_{s,j})}{\omega(v_{s+1,j})} \tilde v_{s,j} + 
\frac{t \omega(v_{s,j+1})}{\omega(v_{s+1,j})} \tilde v_{s,j+1},
\qquad
\frac{\omega(v_{s,j})}{\omega(v_{s+1,j})} > 0;
\]
implying that $\tilde v_{s+1,j}$ is
an affine combination of $\tilde v_{s,j}$ and $\tilde v_{s,j+1}$.
For $j' \notin \{j-1,j\}$, we get that $w_{s+1,j'} = w_{s,j'}$.
Finally, notice that
$w_{s+1,j}$ is a positive multiple of $w_{s,j}$ and
$w_{s+1,j-1}$ is a positive linear combination of
$w_{s,j-1}$ and $w_{s,j}$. Namely, 
\[ 
w_{s+1,j} = \frac{\omega(v_{s,j})}{\omega(v_{s+1,j})} w_{s,j}, \qquad
w_{s+1,j-1} =
\frac{\omega(v_{s+1,j})}{\omega(v_{s,j})} w_{s,j-1} + t w_{s,j}.  \]
Therefore, up to multiplication by a positive diagonal matrix,
the move from $\tilde M_s$ to $\tilde M_{s+1}$ is 
a positive elementary move.
Thus 
$\rank_{-}(\tilde M_{s+1}) \le \rank_{-}(\tilde M_s)$,
or, equivalently,
$\rank_{I}(M_{s+1}) \le \rank_{I}(M_s)$.
Finally, notice that the inequality $m(M_s) m(M_{s+1}) < 0$
implies that $m(\tilde M_s) m(\tilde M_{s+1}) < 0$
and therefore
$\rank_{I}(M_{s+1}) < \rank_{I}(M_s)$.
\end{proof}

\begin{remark}
\label{remark:loose}
The careful reader might have noticed that, in fact, the proof of
Lemma \ref{lemma:step}  
implies a somewhat sharper claim which is however more difficult to state.
Together with $r(2,n) = 2(n-2)$,
this results in a  stronger statement   
than Theorem \ref{theorem:main}.
But  under such minor improvement the leading term of \eqref{eq:main} will stay the same.   
In particular, these adjustments in our proof will 
 not be sufficient to obtain a linear upper bound for $r(3,n)$ instead of the cubic one given by \eqref{eq:main}.
(The reader will recall that Grassmann convexity conjecture
claims that $r(3,n) = 3(n-3)$, see Equation \eqref{equation:grassmann2}.) 
\end{remark}

\bibliographystyle{crplain}

\nocite{*}

\bibliography{Mybib}

\bigskip

\end{document}